\newtheorem{theorem}{Theorem}
\newtheorem{corollary}[theorem]{Corollary}
\newtheorem{lemma}[theorem]{Lemma}
\newtheorem{proposition}[theorem]{Proposition}
\newtheorem{remark}[theorem]{Remark}
\newenvironment{proof}[1][Proof]{\noindent\textbf{#1.} }{\ \rule{0.5em}{0.5em}}
\numberwithin{theorem}{section}
\numberwithin{equation}{section}
\begin{document}

\title{Jet Riemann-Hamilton geometrization for the conformal deformed
Berwald-Mo\'{o}r quartic metric depending on momenta}
\author{Alexandru Oan\u{a} and Mircea Neagu}
\date{}
\maketitle

\begin{abstract}
In this paper we expose on the dual $1$-jet space $J^{1\ast }(\mathbb{R}%
,M^{4})$ the distinguished (d-) Riemannian geometry (in the sense of
d-connection, d-torsions, d-curvatures and some gravitational-like and
electromagnetic-like geometrical models) for the $(t,x)$-conformal deformed
Berwald-Mo\'{o}r Hamiltonian metric of order four.
\end{abstract}

\textbf{Mathematics Subject Classification (2010):} 70S05, 53C07, 53C80.

\textbf{Key words and phrases:} $(t,x)$-conformal deformation of quartic
Berwald-Mo\'{o}r Hamiltonian metric, canonical nonlinear connection, Cartan
canonical linear connection, d-torsions and d-curvatures, geometrical
Einstein-like equations.

\section{Introduction}

The geometric-physical Lagrangian or Hamiltonian Berwald-Mo\'{o}r structure (%
\cite{Berwald}, \cite{Moor-1}) was intensively investigated by P.K.
Rashevski (\cite{Rashevski}) and further fundamented and developed by D.G.
Pavlov, G.I. Garas'ko and S.S. Kokarev (\cite{Pavlov-1}, \cite{Garasko-1}, 
\cite{Pavlov-Kokarev}). In their works, the preceding Russian scientists
emphasize the importance in the theory of space-time structure, gravitation
and electromagnetism of the geometry produced by the classical Berwald-Mo%
\'{o}r Lagrangian metric%
\begin{equation*}
F:TM\rightarrow \mathbb{R},\mathbb{\qquad }F(y)=\sqrt[n]{y^{1}y^{2}...y^{n}},%
\mathbb{\qquad }n\geq 2,
\end{equation*}%
or by the corresponding Berwald-Mo\'{o}r Hamiltonian metric%
\begin{equation*}
H:T^{\ast }M\rightarrow \mathbb{R},\mathbb{\qquad }H(p)=\sqrt[n]{%
p_{1}p_{2}...p_{n}}.
\end{equation*}%
In such a perspective, according to the recent geometric-physical ideas
proposed by Garas'ko (\cite{Garasko-1}), we consider that a distinguished
Riemannian geometry (in the sense of d-connection, d-torsions, d-curvatures
and some gravitational-like and electromagnetic-like geometrical models) for
the conformal deformations of the jet Berwald-Mo\'{o}r Hamiltonian metric of
order four is required. Note that a similar geometric-physical study for the 
$(t,x)$-conformal deformations of the jet Berwald-Mo\'{o}r Lagrangian metric
of order four is now completely developed in the paper \cite{Neagu-x-B-M(4)}%
. Also, few elements of distinguished Hamiltonian geometry produced by the
cotangent quartic Berwald-Mo\'{o}r metric depending of momenta are already
presented in the paper \cite{At-Bal-Neagu}.

In such a geometrical and physical context, this paper investigates on the
dual$\ 1$-jet space $J^{1\ast }(\mathbb{R},M^{4})$ the Riemann-Hamilton
distinguished geometry (together with a theoretical-geometric field-like
theory) for the $(t,x)$\textit{-conformal deformed Berwald-Mo\'{o}r
Hamiltonian metric of order four}\footnote{%
We assume that we have $p_{1}^{1}p_{2}^{1}p_{3}^{1}p_{4}^{1}>0$. This is one
domain where we can $p$-differentiate the Hamiltonian function $H(t,x,p)$.}%
\begin{equation}
H(t,x,p)=2e^{-\sigma (x)}\sqrt{h_{11}(t)}\left[
p_{1}^{1}p_{2}^{1}p_{3}^{1}p_{4}^{1}\right] ^{1/4},  \label{rheon-B-M}
\end{equation}%
where $\sigma (x)$ is a smooth non-constant function on $M^{4}$, $h_{11}(t)$
is a Riemannian metric on $\mathbb{R}$, and $%
(t,x,p)=(t,x^{1},x^{2},x^{3},x^{4},p_{1}^{1},p_{2}^{1},p_{3}^{1},p_{4}^{1})$
are the coordinates of the\ momentum phase space $J^{1\ast }(\mathbb{R}%
,M^{4})$; these transform by the rules (the Einstein convention of summation
is assumed everywhere):%
\begin{equation}
\widetilde{t}=\widetilde{t}(t),\quad \widetilde{x}^{i}=\widetilde{x}%
^{i}(x^{j}),\quad \widetilde{p}_{i}^{1}=\dfrac{\partial x^{j}}{\partial 
\widetilde{x}^{i}}\dfrac{d\widetilde{t}}{dt}p_{j}^{1},  \label{tr-rules}
\end{equation}%
where $i,j=\overline{1,4},$ rank $(\partial \widetilde{x}^{i}/\partial
x^{j})=4$ and $d\widetilde{t}/dt\neq 0$. It is important to note that, based
on the geometrical ideas promoted by Miron, Hrimiuc, Shimada and Sab\u{a}u
in the classical Hamiltonian geometry of cotangent bundles (\cite%
{Miro+Hrim+Shim+Saba}), together with those used by Atanasiu, Neagu and Oan%
\u{a} in the geometry of dual $1$-jet spaces, the differential geometry (in
the sense of d-connections, d-torsions, d-curvatures, abstract
gravitational-like and electromagnetic-like geometrical theories) produced
by a jet Hamiltonian function $H:J^{1\ast }(\mathbb{R},M^{n})\rightarrow 
\mathbb{R}$ is now completely done in the papers \cite{Atan-Neag2}, \cite%
{Atan+Neag1}, \cite{Oana+Neag-2} and \cite{Oana+Neag-3}. In what follows, we
apply the general geometrical results from \cite{Oana+Neag-2} and \cite%
{Oana+Neag-3} to the square of Hamiltonian metric (\ref{rheon-B-M}), which
is given by ($n=4$)%
\begin{equation}
\overset{\ast }{H}(t,x,p)=H^{2}(t,x,p)=4e^{-2\sigma (x)}h_{11}(t)\left[
p_{1}^{1}p_{2}^{1}p_{3}^{1}p_{4}^{1}\right] ^{1/2}.  \label{rheon-B-M2}
\end{equation}

\begin{remark}
The momentum Hamiltonian metric (\ref{rheon-B-M2}) is exactly the natural
Hamiltonian attached to the square of the jet Berwald-Mo\'{o}r Lagrangian
metric$,$ which has the expression%
\begin{equation}
\overset{\ast }{L}(t,x,y)=e^{2\sigma (x)}h^{11}(t)\left[
y_{1}^{1}y_{1}^{2}y_{1}^{3}y_{1}^{4}\right] ^{1/2}.
\label{square-Lagrangian}
\end{equation}%
In other words$,$ we have%
\begin{equation*}
\overset{\ast }{H}(t,x,p)=p_{r}^{1}y_{1}^{r}-\overset{\ast }{L}(t,x,y),
\end{equation*}%
where $p_{i}^{1}=\partial \overset{\ast }{L}/\partial y_{1}^{i}$. Note that
the jet Lagrangian metric (\ref{square-Lagrangian}) is even the square of
the conformal deformed jet quartic Berwald-Mo\'{o}r Finslerian metric%
\begin{equation*}
F(t,x,y)=e^{\sigma (x)}\sqrt{h^{11}(t)}\left[
y_{1}^{1}y_{1}^{2}y_{1}^{3}y_{1}^{4}\right] ^{1/4}.
\end{equation*}%
In other words$,$ we have $\overset{\ast }{L}=F^{2}$.
\end{remark}

\section{The canonical nonlinear connection}

Using the notation $\mathcal{P}^{1111}:=p_{1}^{1}p_{2}^{1}p_{3}^{1}p_{4}^{1}$
and taking into account that we have%
\begin{equation*}
\frac{\partial \mathcal{P}^{1111}}{\partial p_{i}^{1}}=\frac{\mathcal{P}%
^{1111}}{p_{i}^{1}},
\end{equation*}%
then the \textit{fundamental metrical d-tensor} produced by the metric (\ref%
{rheon-B-M2}) is given by the formula (no sum by $i$ or $j$)%
\begin{equation}  \label{g*-jos-(ij)}
\overset{\ast }{g}\;^{\!\!ij}(t,x,p)=\frac{h^{11}(t)}{2}\frac{\partial ^{2}%
\overset{\ast }{H}}{\partial p_{i}^{1}\partial p_{j}^{1}}=\frac{e^{-2\sigma
(x)}\left( 1-2\delta ^{ij}\right) }{2}\frac{\left[ \mathcal{P}^{1111}\right]
^{1/2}}{p_{i}^{1}p_{j}^{1}}\text{.}
\end{equation}%
Moreover, the matrix $\overset{\ast }{g}=(\overset{\ast }{g}\;^{\!\!ij})$
admits the inverse $\overset{\ast }{g}\;^{\!\!-1}=(\overset{\ast }{g}_{jk})$%
, whose entries are given by%
\begin{equation}
\overset{\ast }{g}_{jk}=\frac{e^{2\sigma (x)}\left( 1-2\delta _{jk}\right) %
\left[ \mathcal{P}^{1111}\right] ^{-1/2}}{2}p_{j}^{1}p_{k}^{1}\text{ (no sum
by }j\text{ or }k\text{).}  \label{g*-sus-(jk)}
\end{equation}

Let us consider the Christoffel symbol of the Riemannian metric $h_{11}(t)$
on $\mathbb{R}$, which is given by%
\begin{equation*}
\text{\textsc{k}}_{11}^{1}=\frac{h^{11}}{2}\frac{dh_{11}}{dt},
\end{equation*}%
where $h^{11}=1/h_{11}>0$. Then, using the notation $\sigma _{i}:=\partial
\sigma /\partial x^{i}$, we find the following geometrical result:

\begin{proposition}
For the $(t,x)$-conformal deformed Berwald-Mo\'{o}r Hamiltonian metric of
order four (\ref{rheon-B-M2})$,$ the \textbf{canonical nonlinear connection}
on the dual $1$-jet space $J^{1\ast }(\mathbb{R},M^{4})$ has the components%
\begin{equation}
N=\left( \underset{1}{N}\text{{}}_{(i)1}^{(1)}=\text{\textsc{k}}%
_{11}^{1}p_{i}^{1},\text{ }\underset{2}{N}\text{{}}_{(i)j}^{(1)}=-4\sigma
_{i}p_{i}^{1}\delta _{ij}\right) .  \label{nlc-B-M}
\end{equation}
\end{proposition}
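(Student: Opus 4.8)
The plan is to compute the two families of components of the canonical nonlinear connection directly from their defining formulas for a jet Hamiltonian system, which are available from the general theory in \cite{Oana+Neag-2} and \cite{Oana+Neag-3}. Recall that for a jet Hamiltonian $\overset{\ast}{H}$ on $J^{1\ast}(\mathbb{R},M^{n})$ the temporal component $\underset{1}{N}{}_{(i)1}^{(1)}$ is built purely from the Christoffel symbol $\text{\textsc{k}}_{11}^{1}$ of $h_{11}(t)$ acting on the momentum variable (it does not see the spatial structure of $\overset{\ast}{H}$ at all), so the first equality $\underset{1}{N}{}_{(i)1}^{(1)}=\text{\textsc{k}}_{11}^{1}p_{i}^{1}$ should drop out essentially by inspection once the relevant formula is recalled. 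The substantive computation is the spatial component $\underset{2}{N}{}_{(i)j}^{(1)}$, whose general expression is a combination of $\overset{\ast}{g}_{ir}$, the momenta $p_{k}^{1}$, and the second-order partial derivatives $\partial^{2}\overset{\ast}{H}/\partial x^{j}\partial p_{k}^{1}$ (together, depending on the normalization used in those papers, a Cartan-type correction term). So first I would write down that general formula verbatim.

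Next I would carry out the differentiation of $\overset{\ast}{H}(t,x,p)=4e^{-2\sigma(x)}h_{11}(t)\left[\mathcal{P}^{1111}\right]^{1/2}$. Using the identity $\partial\mathcal{P}^{1111}/\partial p_{i}^{1}=\mathcal{P}^{1111}/p_{i}^{1}$ already recorded in the excerpt, one gets $\partial\overset{\ast}{H}/\partial p_{k}^{1}=2e^{-2\sigma}h_{11}\left[\mathcal{P}^{1111}\right]^{1/2}/p_{k}^{1}$, and then $\partial^{2}\overset{\ast}{H}/\partial x^{j}\partial p_{k}^{1}=-4\sigma_{j}e^{-2\sigma}h_{11}\left[\mathcal{P}^{1111}\right]^{1/2}/p_{k}^{1}$, i.e. it equals $-2\sigma_{j}$ times $\partial\overset{\ast}{H}/\partial p_{k}^{1}$, which in turn is $-4\sigma_{j}$ times a quantity proportional to $\overset{\ast}{g}_{\,\cdot\,\cdot}$-building blocks. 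The key mechanism is that the $x$-dependence of $\overset{\ast}{H}$ enters only through the scalar factor $e^{-2\sigma(x)}$, so every $x$-derivative simply pulls down a factor $-2\sigma_{j}$ and reproduces the momentum structure already present. I would then contract against $\overset{\ast}{g}_{ir}$ (whose explicit form \eqref{g*-sus-(jk)} is available) and simplify; the $\left[\mathcal{P}^{1111}\right]^{\pm1/2}$ powers, the $e^{\pm2\sigma}$ factors and the $h_{11}h^{11}$ pairs should cancel, while the Kronecker-delta bookkeeping from $\overset{\ast}{g}_{ir}$ collapses the sum over $r$ to a single term, leaving $-4\sigma_{i}p_{i}^{1}\delta_{ij}$. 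I would be careful with the ``no sum'' conventions attached to \eqref{g*-jos-(ij)}–\eqref{g*-sus-(jk)} and with the $(1-2\delta)$ factors, since those signs are exactly what produces the diagonal $\delta_{ij}$ in the final answer.

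The main obstacle I anticipate is purely bookkeeping rather than conceptual: correctly tracking the index summation when the fundamental d-tensor is ``diagonal-plus-rank-one'' in the disguised form $(1-2\delta^{ij})/(p_{i}^{1}p_{j}^{1})$, and making sure any Cartan-tensor correction term in the general formula for $\underset{2}{N}{}_{(i)j}^{(1)}$ either vanishes or combines cleanly — for a metric of this Berwald--Mo\'{o}r type the homogeneity in $p$ is what makes such terms collapse, and I would invoke the $0$-homogeneity of $\overset{\ast}{g}\;^{\!\!ij}$ in $p$ (equivalently Euler's relation $p_{k}^{1}\,\partial\overset{\ast}{g}\;^{\!\!ij}/\partial p_{k}^{1}=0$, which follows from \eqref{g*-jos-(ij)}) to dispatch it. Once the cancellations are organized, both components in \eqref{nlc-B-M} follow, and the transformation rules \eqref{tr-rules} guarantee that the objects so defined are genuine components of a nonlinear connection, which completes the proof.
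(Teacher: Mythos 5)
Your proposal takes essentially the same route as the paper: the paper's proof consists precisely of quoting $\underset{1}{N}{}_{(i)1}^{(1)}\overset{def}{=}\text{\textsc{k}}_{11}^{1}p_{i}^{1}$ and the general four-term formula for $\underset{2}{N}{}_{(i)j}^{(1)}$ from \cite{Atan-Neag2}, followed by ``direct calculation,'' which is exactly the computation you outline, and your derivatives of $\overset{\ast}{H}$ are correct. One concrete caution for the execution: the two Poisson-bracket-type terms $\tfrac{\partial \overset{\ast}{g}_{ij}}{\partial x^{k}}\tfrac{\partial \overset{\ast}{H}}{\partial p_{k}^{1}}-\tfrac{\partial \overset{\ast}{g}_{ij}}{\partial p_{k}^{1}}\tfrac{\partial \overset{\ast}{H}}{\partial x^{k}}$ cannot be dispatched by the $0$-homogeneity argument you suggest --- they contribute the nonzero quantity $(1-2\delta_{ij})\left(\sigma_{i}p_{j}^{1}+\sigma_{j}p_{i}^{1}\right)$, and this is exactly what cancels the non-diagonal $-\sigma_{i}p_{j}^{1}-\sigma_{j}p_{i}^{1}$ produced by the terms $\overset{\ast}{g}_{ik}\,\partial^{2}\overset{\ast}{H}/\partial x^{j}\partial p_{k}^{1}+\overset{\ast}{g}_{jk}\,\partial^{2}\overset{\ast}{H}/\partial x^{i}\partial p_{k}^{1}$ (which alone do \emph{not} collapse to a single term nor yield anything proportional to $\delta_{ij}$), so the diagonal answer $-4\sigma_{i}p_{i}^{1}\delta_{ij}$ only emerges from the interplay of all four terms.
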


\begin{proof}
The canonical nonlinear connection produced by $\overset{\ast }{H}$ on the
dual $1$-jet space $J^{1\ast }(\mathbb{R},M^{4})$ has the following
components (see \cite{Atan-Neag2}): $\underset{1}{N}${}$_{(i)1}^{(1)}\overset%
{def}{=}$\textsc{k}$_{11}^{1}p_{i}^{1}$ and%
\begin{equation*}
\underset{2}{N}\text{{}}_{(i)j}^{(1)}=\dfrac{h^{11}}{4}\left[ \dfrac{%
\partial \overset{\ast }{g}_{ij}}{\partial x^{k}}\dfrac{\partial \overset{%
\ast }{H}}{\partial p_{k}^{1}}-\dfrac{\partial \overset{\ast }{g}_{ij}}{%
\partial p_{k}^{1}}\dfrac{\partial \overset{\ast }{H}}{\partial x^{k}}+%
\overset{\ast }{g}_{ik}\dfrac{\partial ^{2}\overset{\ast }{H}}{\partial
x^{j}\partial p_{k}^{1}}+\overset{\ast }{g}_{jk}\dfrac{\partial ^{2}\overset{%
\ast }{H}}{\partial x^{i}\partial p_{k}^{1}}\right] .
\end{equation*}%
Now, by a direct calculation, we obtain (\ref{nlc-B-M}).
\end{proof}

\section{The Cartan canonical $N$-linear connection. Its d-torsions and
d-curvatures}

\hspace{5mm}The nonlinear connection (\ref{nlc-B-M}) produces the dual 
\textit{adapted bases} of d-vector fields (no sum by $i$) 
\begin{equation}
\left\{ \frac{\delta }{\delta t}=\frac{\partial }{\partial t}-\text{\textsc{k%
}}_{11}^{1}p_{r}^{1}\frac{\partial }{\partial p_{r}^{1}}\text{ };\text{ }%
\frac{\delta }{\delta x^{i}}=\frac{\partial }{\partial x^{i}}+4\sigma
_{i}p_{i}^{1}\frac{\partial }{\partial p_{i}^{1}}\text{ };\text{ }\dfrac{%
\partial }{\partial p_{i}^{1}}\right\} \subset \mathcal{X}(E^{\ast })
\label{a-b-v}
\end{equation}%
and d-covector fields (no sum by $i$)%
\begin{equation}
\left\{ dt\text{ };\text{ }dx^{i}\text{ };\text{ }\delta
p_{i}^{1}=dp_{i}^{1}+\text{\textsc{k}}_{11}^{1}p_{i}^{1}dt-4\sigma
_{i}p_{i}^{1}dx^{i}\right\} \subset \mathcal{X}^{\ast }(E^{\ast }),
\label{a-b-co}
\end{equation}%
where $E^{\ast }=J^{1\ast }(\mathbb{R},M^{4})$. The naturalness of the
geometrical adapted bases (\ref{a-b-v}) and (\ref{a-b-co}) is coming from
the fact that, via a transformation of coordinates (\ref{tr-rules}), their
elements transform as the classical tensors. Therefore, the description of
all subsequent geometrical objects on the\ dual $1$-jet space $J^{1\ast }(%
\mathbb{R},M^{4})$ (e.g., the Cartan canonical linear connection, its
torsion and curvature etc.) will be done in local adapted components.
Consequently, by direct computations, we obtain the following geometrical
result:

\begin{proposition}
The Cartan canonical $N$-linear connection produced by the $(t,x)$-conformal
deformed Berwald-Mo\'{o}r Hamiltonian metric of order four (\ref{rheon-B-M2}%
) has the following adapted local components (no sum by $i,$ $j$ or $k$):%
\begin{equation}
C\Gamma (N)=\left( \text{\textsc{k}}_{11}^{1},\text{ }A_{j1}^{i}=0,\text{ }%
H_{jk}^{i}=4\delta _{j}^{i}\delta _{k}^{i}\sigma _{i},\text{ }%
C_{i(1)}^{j(k)}=\mathsf{C}_{i}^{jk}\cdot \frac{p_{i}^{1}}{p_{j}^{1}p_{k}^{1}}%
\right) ,  \label{Cartan-can-connect}
\end{equation}%
where%
\begin{eqnarray*}
\mathsf{C}_{i}^{jk} &=&\frac{1-2\delta ^{jk}-2\delta _{i}^{j}-2\delta
_{i}^{k}+8\delta _{i}^{j}\delta _{i}^{k}}{8}= \\
&=&\left\{ 
\begin{array}{ll}
\dfrac{1}{8}, & i\neq j\neq k\neq i\medskip \\ 
-\dfrac{1}{8}, & i=j\neq k\text{ or }i=k\neq j\text{ or }j=k\neq i\medskip
\\ 
\dfrac{3}{8}, & i=j=k.%
\end{array}%
\right.
\end{eqnarray*}
\end{proposition}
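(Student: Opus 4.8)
The plan is to specialize to the fundamental metrical d-tensor of the metric, i.e. to (\ref{g*-jos-(ij)})--(\ref{g*-sus-(jk)}), the general expressions for the adapted components of the Cartan canonical $N$-linear connection attached to a jet Hamiltonian function, as established in \cite{Oana+Neag-2} and \cite{Oana+Neag-3}. There the first component \textsc{k}$_{11}^{1}$ is, by definition, the Christoffel symbol of $h_{11}(t)$ on $\mathbb{R}$, so there is nothing to prove for it; the component $A_{j1}^{i}$ is, up to the usual factor, $\overset{\ast }{g}^{ir}\,\delta \overset{\ast }{g}_{jr}/\delta t$, while the horizontal and vertical coefficients are the standard Koszul-type (generalized Christoffel) combinations
\begin{equation*}
H_{jk}^{i}=\frac{\overset{\ast }{g}^{ir}}{2}\left( \frac{\delta \overset{\ast }{g}_{rj}}{\delta x^{k}}+\frac{\delta \overset{\ast }{g}_{rk}}{\delta x^{j}}-\frac{\delta \overset{\ast }{g}_{jk}}{\delta x^{r}}\right) ,\qquad C_{i(1)}^{j(k)}=-\frac{\overset{\ast }{g}_{ir}}{2}\,\frac{\partial \overset{\ast }{g}^{jk}}{\partial p_{r}^{1}},
\end{equation*}
the last equality being the contracted form of the $p$-Koszul combination, valid because $\partial \overset{\ast }{g}^{jk}/\partial p_{r}^{1}=\tfrac{1}{2}h^{11}\partial ^{3}\overset{\ast }{H}/\partial p_{j}^{1}\partial p_{k}^{1}\partial p_{r}^{1}$ is totally symmetric in $j,k,r$. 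So the whole proof reduces to computing $\delta /\delta t$-, $\delta /\delta x$- and $\partial /\partial p$-derivatives of (\ref{g*-sus-(jk)}) and (\ref{g*-jos-(ij)}) and contracting, using repeatedly the identity $\partial \mathcal{P}^{1111}/\partial p_{r}^{1}=\mathcal{P}^{1111}/p_{r}^{1}$.

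For $A_{j1}^{i}$ the computation is immediate: $\overset{\ast }{g}_{jk}$ carries no explicit $t$-dependence, and its momentum part $p_{j}^{1}p_{k}^{1}[\mathcal{P}^{1111}]^{-1/2}$ is homogeneous of degree $2-4\cdot \tfrac{1}{2}=0$ in $p$, so Euler's identity gives $p_{r}^{1}\,\partial \overset{\ast }{g}_{jk}/\partial p_{r}^{1}=0$; since $\delta /\delta t=\partial /\partial t-$\textsc{k}$_{11}^{1}p_{r}^{1}\,\partial /\partial p_{r}^{1}$, we get $\delta \overset{\ast }{g}_{jk}/\delta t=0$, hence $A_{j1}^{i}=0$. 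For $H_{jk}^{i}$ I would first compute $\delta \overset{\ast }{g}_{jk}/\delta x^{m}=\partial \overset{\ast }{g}_{jk}/\partial x^{m}+4\sigma _{m}p_{m}^{1}\,\partial \overset{\ast }{g}_{jk}/\partial p_{m}^{1}$ (no sum over $m$): the $x$-derivative contributes $2\sigma _{m}\overset{\ast }{g}_{jk}$, while $\partial \overset{\ast }{g}_{jk}/\partial p_{m}^{1}=\overset{\ast }{g}_{jk}(\delta _{mj}/p_{j}^{1}+\delta _{mk}/p_{k}^{1}-1/(2p_{m}^{1}))$ gives $4\sigma _{m}p_{m}^{1}\,\partial \overset{\ast }{g}_{jk}/\partial p_{m}^{1}=4\sigma _{m}(\delta _{mj}+\delta _{mk}-\tfrac{1}{2})\overset{\ast }{g}_{jk}$, whose $-\tfrac{1}{2}$ part cancels the $2\sigma _{m}$, leaving $\delta \overset{\ast }{g}_{jk}/\delta x^{m}=4\sigma _{m}(\delta _{mj}+\delta _{mk})\overset{\ast }{g}_{jk}$. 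Substituting into the Koszul combination and using $\overset{\ast }{g}^{ir}\overset{\ast }{g}_{rj}=\delta _{j}^{i}$ together with the symmetry of $\overset{\ast }{g}$, the resulting terms cancel in pairs except for the contributions that force $i=j=k$, which add up to $4\sigma _{i}$; this is exactly $H_{jk}^{i}=4\delta _{j}^{i}\delta _{k}^{i}\sigma _{i}$.

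Finally, for $C_{i(1)}^{j(k)}$ I would compute $\partial \overset{\ast }{g}^{jk}/\partial p_{r}^{1}=\overset{\ast }{g}^{jk}(1/(2p_{r}^{1})-\delta _{rj}/p_{j}^{1}-\delta _{rk}/p_{k}^{1})$ from (\ref{g*-jos-(ij)}) and contract it with $-\tfrac{1}{2}\overset{\ast }{g}_{ir}$ (sum over $r$). The only mildly delicate point is the trace $\sum _{r}\overset{\ast }{g}_{ir}/p_{r}^{1}$, which by the explicit form (\ref{g*-sus-(jk)}) equals $\tfrac{1}{2}e^{2\sigma }p_{i}^{1}[\mathcal{P}^{1111}]^{-1/2}\sum _{r=1}^{4}(1-2\delta _{ir})=e^{2\sigma }p_{i}^{1}[\mathcal{P}^{1111}]^{-1/2}$, the value $\sum _{r=1}^{4}(1-2\delta _{ir})=2$ being the place where $n=4$ enters; the two remaining terms $\overset{\ast }{g}_{ij}/p_{j}^{1}$ and $\overset{\ast }{g}_{ik}/p_{k}^{1}$ are read off directly from (\ref{g*-sus-(jk)}). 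Collecting the three pieces, the common factor $p_{i}^{1}/(p_{j}^{1}p_{k}^{1})$ factors out and the coefficient in front of it is $\tfrac{1}{8}(1-2\delta ^{jk})(1-2\delta _{i}^{j}-2\delta _{i}^{k})$, which, using $\delta ^{jk}\delta _{i}^{j}=\delta ^{jk}\delta _{i}^{k}=\delta _{i}^{j}\delta _{i}^{k}$, expands to the claimed $\mathsf{C}_{i}^{jk}=\tfrac{1}{8}(1-2\delta ^{jk}-2\delta _{i}^{j}-2\delta _{i}^{k}+8\delta _{i}^{j}\delta _{i}^{k})$, whose three values $\tfrac{1}{8},-\tfrac{1}{8},\tfrac{3}{8}$ correspond to all indices distinct, exactly two equal, and all three equal. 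I expect this last step to be the main obstacle — not because any single manipulation is hard, but because it is the only place where all three adapted directions and a non-trivial momentum-trace come together, so the Kronecker-delta bookkeeping and the ``no sum'' conventions built into (\ref{g*-jos-(ij)})--(\ref{g*-sus-(jk)}) must be handled carefully before the three-case form of $\mathsf{C}_{i}^{jk}$ emerges; the $A$- and $H$-computations, by contrast, are short once the $\delta $-derivatives above are in hand.
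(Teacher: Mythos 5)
Your proposal is correct and follows essentially the same route as the paper: the authors likewise quote the defining formulas for $A_{j1}^{i}$, $H_{jk}^{i}$ and $C_{i(1)}^{j(k)}$ from the general theory (including the same collapse of the vertical Koszul combination to a single term via the total symmetry of $\partial \overset{\ast }{g}\;^{\!\!jk}/\partial p_{r}^{1}$) and then invoke ``direct calculation'' with the adapted derivatives (\ref{a-b-v}). Your write-up simply supplies the omitted computations, and I verified that each of them (the homogeneity/Euler argument for $A_{j1}^{i}=0$, the identity $\delta \overset{\ast }{g}_{jk}/\delta x^{m}=4\sigma _{m}(\delta _{mj}+\delta _{mk})\overset{\ast }{g}_{jk}$, the trace $\sum_{r}\overset{\ast }{g}_{ir}/p_{r}^{1}$, and the final delta-identity expansion of $\mathsf{C}_{i}^{jk}$) is accurate.
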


\begin{proof}
The adapted components of the Cartan canonical connection are given by the
formulas (see \cite{Oana+Neag-2})${,\quad }$%
\begin{equation*}
{A_{j1}^{i}\overset{def}{=}{\dfrac{\overset{\ast }{g}\;^{\!\!il}}{2}}{\dfrac{%
\delta \overset{\ast }{g}_{lj}}{\delta t}=0}},\quad H{_{jk}^{i}}\overset{def}%
{=}\dfrac{\overset{\ast }{g}\;^{\!\!ir}}{2}\left( {\dfrac{\delta \overset{%
\ast }{g}_{jr}}{\delta x^{k}}}+{\dfrac{\delta \overset{\ast }{g}_{kr}}{%
\delta x^{j}}}-{\dfrac{\delta \overset{\ast }{g}_{jk}}{\delta x^{r}}}\right)
=4\delta _{j}^{i}\delta _{k}^{i}\sigma _{i},
\end{equation*}%
\begin{equation*}
C_{i(1)}^{j(k)}{\overset{def}{=}-{\dfrac{\overset{\ast }{g}_{ir}}{2}}\left( {%
\dfrac{\partial \overset{\ast }{g}\;^{\!\!jr}}{\partial p_{k}^{1}}}+{\dfrac{%
\partial \overset{\ast }{g}\;^{\!\!kr}}{\partial p_{j}^{1}}}-{\dfrac{%
\partial \overset{\ast }{g}\;^{\!\!jk}}{\partial p_{r}^{1}}}\right) =-{%
\dfrac{\overset{\ast }{g}_{ir}}{2}\dfrac{\partial \overset{\ast }{g}%
\;^{\!\!jr}}{\partial p_{k}^{1}}}}.
\end{equation*}

Using the derivative operators (\ref{a-b-v}), the direct calculations lead
us to the required results.
\end{proof}

\begin{remark}
It is important to note that the vertical d-tensor $C_{j(k)}^{i(1)}$ also
has the properties (sum by $m$):%
\begin{equation}
C_{i(1)}^{j(k)}=C_{i(1)}^{k(j)},\quad C_{i(1)}^{j(m)}p_{m}^{1}=0,\quad
C_{m(1)}^{j(m)}=0,\quad C_{i(1)|m}^{j(m)}=0,  \label{equalitie-C}
\end{equation}%
where%
\begin{equation*}
C_{i(1)|j}^{l(k)}{\overset{def}{=}}\frac{\delta {C_{i(1)}^{l(k)}}}{\delta
x^{j}}+{C_{i(1)}^{r(k)}H_{rj}^{l}}-{C_{r(1)}^{l(k)}H_{ij}^{r}}+{%
C_{i(1)}^{l(r)}H_{rj}^{k}.}
\end{equation*}
\end{remark}

\begin{proposition}
The Cartan canonical connection of the $(t,x)$-conformal deformed Berwald-Mo%
\'{o}r Hamiltonian metric of order four (\ref{rheon-B-M2}) has \textbf{two}
effective local torsion d-tensors:%
\begin{equation*}
R_{(r)ij}^{(1)}=-4\sigma _{ij}\left( p_{i}^{1}\delta _{ir}-p_{j}^{1}\delta
_{jr}\right) ,\quad P_{i(1)}^{r(j)}=\mathsf{C}_{i}^{rj}\cdot \frac{p_{i}^{1}%
}{p_{r}^{1}p_{j}^{1}},
\end{equation*}%
where $\sigma _{ij}:=\dfrac{\partial ^{2}\sigma }{\partial x^{i}\partial
x^{j}}.$
\end{proposition}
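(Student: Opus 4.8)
The plan is to start from the complete list of adapted local torsion $d$-tensors of the Cartan canonical $N$-linear connection on a dual $1$-jet space, as recorded in \cite{Oana+Neag-2}, and to show that for the connection (\ref{Cartan-can-connect}) every member of this list vanishes identically except the two displayed in the statement. Besides the identically zero $\mathbb{R}$-contributions, that list consists of the two nonlinear-connection curvature $d$-tensors $R_{(r)1j}^{(1)}$ and $R_{(r)ij}^{(1)}$, two mixed $h$--$v$ $d$-tensors (one carrying a lower $p$-index, one carrying an upper $p$-index, the latter being our $P_{i(1)}^{r(j)}$), the horizontal torsions built from $A_{j1}^{i}$ and from the skew-symmetric part $H_{jk}^{i}-H_{kj}^{i}$, and the purely vertical torsion built from $C_{i(1)}^{j(k)}-C_{i(1)}^{k(j)}$.

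First I would dispose of the components that vanish for purely structural reasons. Since $A_{j1}^{i}=0$ and $H_{jk}^{i}=4\delta_{j}^{i}\delta_{k}^{i}\sigma_{i}$ is symmetric in $j,k$, all horizontal torsion components are zero; since $C_{i(1)}^{j(k)}$ is symmetric in $j,k$ by (\ref{equalitie-C}), the vertical torsion is zero. For $R_{(r)1j}^{(1)}=\delta\underset{1}{N}\text{{}}_{(r)1}^{(1)}/\delta x^{j}-\delta\underset{2}{N}\text{{}}_{(r)j}^{(1)}/\delta t$ I would use that $\text{\textsc{k}}_{11}^{1}$ depends only on $t$ while $\sigma_{r}$ depends only on $x$; applying the operators (\ref{a-b-v}) gives $4\text{\textsc{k}}_{11}^{1}\sigma_{j}p_{j}^{1}\delta_{rj}-4\text{\textsc{k}}_{11}^{1}\sigma_{r}p_{r}^{1}\delta_{rj}$, and the factor $\delta_{rj}$ forces $r=j$, so the two terms coincide and cancel. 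An analogous short computation annihilates the mixed $d$-tensor carrying the lower $p$-index.

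It then remains to compute the two surviving torsions. For $R_{(r)ij}^{(1)}=\delta\underset{2}{N}\text{{}}_{(r)i}^{(1)}/\delta x^{j}-\delta\underset{2}{N}\text{{}}_{(r)j}^{(1)}/\delta x^{i}$, substituting $\underset{2}{N}\text{{}}_{(r)i}^{(1)}=-4\sigma_{i}p_{i}^{1}\delta_{ri}$ and using $\delta/\delta x^{j}=\partial/\partial x^{j}+4\sigma_{j}p_{j}^{1}\,\partial/\partial p_{j}^{1}$ (no sum by $j$) produces, before antisymmetrization, the terms $-4\sigma_{ij}p_{i}^{1}\delta_{ir}$ and $-16\sigma_{i}\sigma_{j}p_{j}^{1}\delta_{ij}\delta_{ir}$. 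The decisive observation is that the quadratic term is supported on $\delta_{ij}$, hence symmetric under $i\leftrightarrow j$, and therefore killed by the antisymmetrization; using $\sigma_{ij}=\sigma_{ji}$ one is left with $R_{(r)ij}^{(1)}=-4\sigma_{ij}(p_{i}^{1}\delta_{ir}-p_{j}^{1}\delta_{jr})$. For $P_{i(1)}^{r(j)}$, the general formula from \cite{Oana+Neag-2} expresses it as $C_{i(1)}^{r(j)}$ plus terms involving $A_{j1}^{i}$ and a $p$-derivative of the nonlinear connection; since $A_{j1}^{i}=0$ and the remaining contribution cancels exactly as in the computation of $R_{(r)1j}^{(1)}$ above, one gets $P_{i(1)}^{r(j)}=C_{i(1)}^{r(j)}=\mathsf{C}_{i}^{rj}\cdot p_{i}^{1}/(p_{r}^{1}p_{j}^{1})$, which is already available from (\ref{Cartan-can-connect}). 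Finally I would note that these two are genuinely nonzero ($\mathsf{C}_{i}^{rj}\neq 0$ and $\sigma$ is non-constant), so they are indeed the effective torsion $d$-tensors.

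The main obstacle here is not conceptual but bookkeeping: one must reproduce faithfully the full torsion list of \cite{Oana+Neag-2} with its multi-index conventions and verify the vanishing of every non-displayed member, and -- most delicately -- keep careful track of the (no-sum) Kronecker factors in the computation of $R_{(r)ij}^{(1)}$ so as to be certain that the $\sigma_{i}\sigma_{j}$ contributions really do drop out under the $i\leftrightarrow j$ antisymmetrization. Everything else reduces to a routine application of the adapted derivative operators (\ref{a-b-v}).
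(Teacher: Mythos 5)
Your proposal is correct and follows essentially the same route as the paper: the paper's proof likewise invokes the general list of torsion d-tensors from \cite{Oana+Neag-2}, observes that all but $R_{(r)ij}^{(1)}$ and $P_{i(1)}^{r(j)}=C_{i(1)}^{r(j)}$ vanish for this connection, and leaves the remaining substitution of (\ref{nlc-B-M}) and (\ref{a-b-v}) as a direct calculation. Your explicit verification that the $\sigma_i\sigma_j$ terms are supported on $\delta_{ij}$ and hence drop out under antisymmetrization is exactly the computation the paper leaves implicit, and it checks out.
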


\begin{proof}
A Cartan canonical connection on the dual $1$-jet space $J^{1\ast }(\mathbb{R%
},M^{4})$ generally has \textit{six} effective local d-tensors of torsion
(for more details, see \cite{Oana+Neag-2}). For the particular Cartan
canonical connection (\ref{Cartan-can-connect}) these reduce only to \textit{%
two} (the other four are zero):%
\begin{equation*}
R_{(r)ij}^{(1)}\overset{def}{=}{\dfrac{\delta \underset{2}{N}\text{{}}%
_{(r)i}^{(1)}}{\delta x^{j}}}-{\dfrac{\delta \underset{2}{N}\text{{}}%
_{(r)j}^{(1)}}{\delta x^{i}}},\quad P_{i(1)}^{r(j)}\overset{def}{=}%
C_{i(1)}^{r(j)}.
\end{equation*}
\end{proof}

\begin{proposition}
The Cartan canonical connection of the $(t,x)$-conformal deformed Berwald-Mo%
\'{o}r Hamiltonian metric of order four (\ref{rheon-B-M2}) has \textbf{three}
effective local curvature d-tensors:%
\begin{equation*}
{R_{ijk}^{l}={\dfrac{\partial H_{ij}^{l}}{\partial x^{k}}}-{\dfrac{\partial
H_{ik}^{l}}{\partial x^{j}}}%
+H_{ij}^{r}H_{rk}^{l}-H_{ik}^{r}H_{rj}^{l}+C_{i(1)}^{l(r)}R_{(r)jk}^{(1)},%
\quad }P_{ij(1)}^{l\text{ }(k)}={-C_{i(1)|j}^{l(k)},}
\end{equation*}%
\begin{equation*}
S_{i(1)(1)}^{l(j)(k)}\overset{def}{=}{{\dfrac{\partial C_{i(1)}^{l(j)}}{%
\partial p_{k}^{1}}}-{\dfrac{\partial C_{i(1)}^{l(k)}}{\partial p_{j}^{1}}}%
+C_{i(1)}^{r(j)}C_{r(1)}^{l(k)}-C_{i(1)}^{r(k)}C_{r(1)}^{l(j)}.}
\end{equation*}
\end{proposition}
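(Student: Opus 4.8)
The plan is to begin from the general description, established in \cite{Oana+Neag-2}, of the curvature d-tensors of an arbitrary Cartan canonical $N$-linear connection on the dual $1$-jet space $J^{1\ast }(\mathbb{R},M^{4})$, and then to verify that, for the particular connection $C\Gamma (N)$ of (\ref{Cartan-can-connect}), every curvature d-tensor other than the three displayed above vanishes identically. The vanishing will rest on the very special form of the coefficients: $\text{\textsc{k}}_{11}^{1}=\text{\textsc{k}}_{11}^{1}(t)$ depends only on $t$, one has $A_{j1}^{i}=0$, the spatial horizontal coefficients $H_{jk}^{i}=4\delta _{j}^{i}\delta _{k}^{i}\sigma _{i}$ depend only on $x$, and the vertical coefficients $C_{i(1)}^{j(k)}$ obey the identities (\ref{equalitie-C}).

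First I would eliminate all curvature d-tensors that carry a temporal index. In the general formulas of \cite{Oana+Neag-2} each such component is assembled from $\delta /\delta t$-derivatives of the connection coefficients and from products involving $A_{j1}^{i}$ and the $x$- and $p$-derivatives of $\text{\textsc{k}}_{11}^{1}p_{i}^{1}$. Since $A_{j1}^{i}\equiv 0$, and since $\text{\textsc{k}}_{11}^{1}$ depends only on $t$ --- so that $\delta \text{\textsc{k}}_{11}^{1}/\delta x^{k}=0$ and no new term arises upon differentiating $\text{\textsc{k}}_{11}^{1}p_{i}^{1}$ --- every one of these components collapses to zero. This disposes of the mixed time--space and time--momentum curvature d-tensors and leaves exactly the three ``spatial--vertical'' ones $R_{ijk}^{l}$, $P_{ij(1)}^{l\ (k)}$ and $S_{i(1)(1)}^{l(j)(k)}$ to be identified.

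Next I would compute these three survivors one by one in the adapted basis (\ref{a-b-v}), using (\ref{Cartan-can-connect}). For $R_{ijk}^{l}$ I substitute $H_{ij}^{l}=4\delta _{i}^{l}\delta _{j}^{l}\sigma _{i}$, so that $\partial H_{ij}^{l}/\partial x^{k}=4\delta _{i}^{l}\delta _{j}^{l}\sigma _{ik}$ (no sum), expand the quadratic part $H_{ij}^{r}H_{rk}^{l}-H_{ik}^{r}H_{rj}^{l}$, and insert the torsion d-tensor $R_{(r)jk}^{(1)}$ found in the preceding proposition together with $C_{i(1)}^{l(r)}$ from (\ref{Cartan-can-connect}); collecting the Kronecker-delta patterns then yields the effective expression. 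For $P_{ij(1)}^{l\ (k)}$, the general formula of \cite{Oana+Neag-2} reduces to $-C_{i(1)|j}^{l(k)}$, because its remaining terms are built from $p$-derivatives of the horizontal coefficients, which are momentum-independent here. Finally $S_{i(1)(1)}^{l(j)(k)}$ is already a purely vertical object and is obtained directly from its definition, using $C_{i(1)}^{j(k)}=\mathsf{C}_{i}^{jk}\cdot p_{i}^{1}/(p_{j}^{1}p_{k}^{1})$ and $\partial \mathcal{P}^{1111}/\partial p_{i}^{1}=\mathcal{P}^{1111}/p_{i}^{1}$.

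The hard part will be the index bookkeeping in $R_{ijk}^{l}$: one must handle the ``no sum'' convention, the products of three and four Kronecker symbols coming from $H_{ij}^{r}H_{rk}^{l}$, and the second derivatives $\sigma _{ij}$, and then check that the contribution $C_{i(1)}^{l(r)}R_{(r)jk}^{(1)}$ merges with the remaining terms into the stated effective curvature d-tensor without leaving spurious terms behind. A minor additional point is to confirm that $P_{ij(1)}^{l\ (k)}$ is genuinely effective, i.e.\ not identically zero: the identity $C_{i(1)|m}^{j(m)}=0$ from (\ref{equalitie-C}) constrains only a contracted version of $C_{i(1)|j}^{l(k)}$, so the full tensor need not vanish. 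Once these verifications are carried out, the proposition follows.
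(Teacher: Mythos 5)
Your proposal is correct and follows essentially the same route as the paper: quote the general curvature formulas from \cite{Oana+Neag-2}, observe that the components attached to the temporal direction vanish because $A_{j1}^{i}=0$ and \textsc{k}$_{11}^{1}$ depends only on $t$, and then simplify the three survivors. One small correction to your justification of $P_{ij(1)}^{l\ (k)}=-C_{i(1)|j}^{l(k)}$: besides $\partial H_{ij}^{l}/\partial p_{k}^{1}=0$, the general formula contains the term $C_{i(1)}^{l(r)}P_{(r)j(1)}^{(1)\ (k)}$ with $P_{(r)j(1)}^{(1)\ (k)}=\partial \underset{2}{N}{}_{(r)j}^{(1)}/\partial p_{k}^{1}+H_{rj}^{k}$, and since $\underset{2}{N}{}_{(r)j}^{(1)}=-4\sigma _{r}p_{r}^{1}\delta _{rj}$ \emph{does} depend on momenta, this drops out only via the cancellation $\partial \underset{2}{N}{}_{(r)j}^{(1)}/\partial p_{k}^{1}=-H_{rj}^{k}$, not by momentum-independence of the horizontal coefficients. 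Also, the ``hard part'' you anticipate (fully expanding $R_{ijk}^{l}$ into Kronecker-delta patterns) is not actually demanded by the statement, which only records the defining formula with $\delta /\delta x^{k}$ legitimately replaced by $\partial /\partial x^{k}$ because $H_{ij}^{l}$ is momentum-independent.
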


\begin{proof}
A Cartan canonical connection on the dual $1$-jet space $J^{1\ast }(\mathbb{R%
},M^{4})$ generally has \textit{five} effective local d-tensors of curvature
(for all details, see \cite{Oana+Neag-2}). For the particular Cartan
canonical connection (\ref{Cartan-can-connect}) these reduce only to \textit{%
three} (the other two are zero). These are $S_{i(1)(1)}^{l(j)(k)}$ and%
\begin{equation*}
\begin{array}{l}
\medskip {R_{ijk}^{l}\overset{def}{=}{\dfrac{\delta H_{ij}^{l}}{\delta x^{k}}%
}-{\dfrac{\delta H_{ik}^{l}}{\delta x^{j}}}%
+H_{ij}^{r}H_{rk}^{l}-H_{ik}^{r}H_{rj}^{l}+C_{i(1)}^{l(r)}R_{(r)jk}^{(1)},}
\\ 
{P_{ij(1)}^{l\;\;(k)}\overset{def}{=}{\dfrac{\partial H_{ij}^{l}}{\partial
p_{k}^{1}}}-C_{i(1)|j}^{l(k)}+C_{i(1)}^{l(r)}P_{(r)j(1)}^{(1)\;%
\;(k)}=-C_{i(1)|j}^{l(k)},}%
\end{array}%
\end{equation*}%
where%
\begin{equation*}
P_{\left( r\right) j\left( 1\right) }^{\left( 1\right) \ \left( k\right) }={%
\dfrac{\partial \underset{2}{N}\overset{\left( 1\right) }{_{\left( r\right)
j}}}{\partial p_{k}^{1}}+H_{rj}^{k}=0.}
\end{equation*}
\end{proof}

\section{From $(t,x)$-conformal deformations of the quartic Berwald-Mo\'{o}r
Hamiltonian metric to field-like geometrical models}

\subsection{Momentum gravitational-like geometrical model}

\hspace{5mm}The $(t,x)$-conformal deformed Berwald-Mo\'{o}r Hamiltonian
metric of order four (\ref{rheon-B-M2}) produces on the momentum phase space 
$J^{1\ast }(\mathbb{R},M^{4})$ the adapted metrical d-tensor (sum by $i$ and 
$j$)%
\begin{equation}
\mathbb{G}=h_{11}dt\otimes dt+\overset{\ast }{g}_{ij}dx^{i}\otimes
dx^{j}+h_{11}\overset{\ast }{g}\;^{\!\!ij}\delta p_{i}^{1}\otimes \delta
p_{j}^{1},  \label{gravit-pot-B-M}
\end{equation}%
where $\overset{\ast }{g}_{jk}$ and $\overset{\ast }{g}\;^{\!\!ij}$ are
given by (\ref{g*-sus-(jk)}) and (\ref{g*-jos-(ij)}), and we have (no sum by 
$i$) $\delta p_{i}^{1}=dp_{i}^{1}+$\textsc{k}$_{11}^{1}p_{i}^{1}dt-4\sigma
_{i}p_{i}^{1}dx^{i}$. We believe that, from a physical point of view, the
metrical d-tensor (\ref{gravit-pot-B-M}) may be regarded as a \textit{%
\textquotedblleft gravitational potential depending on
momenta\textquotedblright }. In our abstract geometric-physical approach,
one postulates that the momentum gravitational potential $\mathbb{G}$ is
governed by the \textit{geometrical Einstein equations}%
\begin{equation}
\text{Ric }(C\Gamma (N))-\frac{\text{Sc }(C\Gamma (N))}{2}\mathbb{G=}%
\mathcal{K}\mathbb{T},  \label{Einstein-eq-global}
\end{equation}%
where

\begin{itemize}
\item Ric $(C\Gamma (N))$ is the \textit{Ricci d-tensor} associated to the
Cartan canonical linear connection (\ref{Cartan-can-connect}); the Cartan
canonical linear connection plays in our geometric-physical theory the same
role as the Levi-Civita connection in the classical Riemannian theory of
gravity;

\item Sc $(C\Gamma (N))$ is the \textit{scalar curvature};

\item $\mathcal{K}$ is the \textit{Einstein constant} and $\mathbb{T}$ is an
intrinsic \textit{momentum stress-energy d-tensor of matter.}\medskip
\end{itemize}

Therefore, using the adapted basis of vector fields (\ref{a-b-v}), we can
locally describe the global geometrical Einstein equations (\ref%
{Einstein-eq-global}). Consequently, some direct computations lead to:

\begin{lemma}
The Ricci tensor of the Cartan canonical connection of the $(t,x)$-conformal
deformed Berwald-Mo\'{o}r Hamiltonian metric of order four (\ref{rheon-B-M2}%
) has the following \textbf{two} effective local Ricci d-tensors (no sum by $%
i$, $j$, $k$ or $l$):%
\begin{equation}
\begin{array}{l}
\bigskip R_{ij}=\left\{ 
\begin{array}{ll}
-2\sigma _{ij}-\dfrac{p_{i}^{1}}{p_{k}^{1}}\sigma _{jk}-\dfrac{p_{i}^{1}}{%
p_{l}^{1}}\sigma _{jl}, & i\neq j,\;\;\{i,j,k,l\}=\{1,2,3,4\}\medskip \\ 
0, & i=j,%
\end{array}%
\right. \\ 
R_{(1)(1)}^{(i)(j)}=-S_{(1)(1)}^{(i)(j)}=\dfrac{4\delta ^{ij}-1}{8}\dfrac{1}{%
p_{i}^{1}p_{j}^{1}}{.}%
\end{array}
\label{Ricci-local}
\end{equation}
\end{lemma}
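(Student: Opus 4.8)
The Ricci d-tensor of a Cartan $N$-linear connection on $J^{1\ast}(\mathbb{R},M^4)$ is assembled by standard contractions of the five curvature d-tensors (see \cite{Oana+Neag-2}); since the previous proposition already killed two of them, only $R^l_{ijk}$, $P^{l\,(k)}_{ij(1)}=-C^{l(k)}_{i(1)|j}$ and $S^{l(j)(k)}_{i(1)(1)}$ can contribute. The plan is to write down the general contraction formulas for the Ricci pieces, namely $R_{ij}=R^r_{ijr}$ (and the mixed block involving $P^{r\,(k)}_{ir(1)}$, which will turn out to vanish), and $R^{(i)(j)}_{(1)(1)}=-S^{m(i)(j)}_{m(1)(1)}$, then substitute the explicit local components from (\ref{Cartan-can-connect}): $A^i_{j1}=0$, $H^i_{jk}=4\delta^i_j\delta^i_k\sigma_i$, and $C^{j(k)}_{i(1)}=\mathsf{C}^{jk}_i\,p^1_i/(p^1_jp^1_k)$, together with the torsion $R^{(1)}_{(r)ij}=-4\sigma_{ij}(p^1_i\delta_{ir}-p^1_j\delta_{jr})$ from Proposition~3.3.

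First I would handle $R_{ij}=R^r_{ijr}$. From Proposition~3.4, $R^l_{ijk}=\partial_k H^l_{ij}-\partial_j H^l_{ik}+H^r_{ij}H^l_{rk}-H^r_{ik}H^l_{rj}+C^{l(r)}_{i(1)}R^{(1)}_{(r)jk}$. Contracting $l=k$ and using that $H^i_{jk}$ is diagonal in all three indices, the first four terms collapse to a handful of $\sigma$-derivative terms; in particular $\partial_k H^k_{ij}$ and the quadratic $H$-terms contribute only when $i=j$ in a way that I expect to cancel, leaving the $i\neq j$ structure coming essentially from the $-\partial_j H^k_{ik}$ term plus the $C\cdot R^{(1)}$ term. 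The contraction $C^{k(r)}_{i(1)}R^{(1)}_{(r)jk}$ is the delicate one: one substitutes both explicit expressions, uses the identity $\mathsf{C}^{jk}_i$ case-split and the orthogonality relations (\ref{equalitie-C}) ($C^{j(m)}_{i(1)}p^1_m=0$, $C^{j(m)}_{m(1)}=0$), and tracks the $p$-ratios $p^1_i/p^1_k$ and $p^1_i/p^1_l$ that appear in the claimed answer. This bookkeeping over the partition $\{i,j,k,l\}=\{1,2,3,4\}$ is where the main effort lies, and I would organize it by fixing $i\neq j$ and summing over the remaining two labels $k,l$.

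Next I would verify the vanishing of the off-diagonal ($h$–$p$) Ricci contributions and of $R_{ij}$ for $i=j$: since $A^i_{j1}=0$ and $P^{(1)\,(k)}_{(r)j(1)}=0$ (established in the proof of Proposition~3.4), the corresponding contracted curvature blocks are identically zero, so only the two d-tensors listed survive. For $i=j$ one checks that the surviving $\sigma$-derivative terms cancel against the diagonal part of the $C\cdot R^{(1)}$ contraction — the torsion factor $p^1_i\delta_{ir}-p^1_j\delta_{jr}$ is antisymmetric in $i\leftrightarrow j$ and forces $R_{ii}=0$ once the $H$-terms are accounted for.

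Finally, for $R^{(i)(j)}_{(1)(1)}=-S^{m(i)(j)}_{m(1)(1)}$ I would plug $C^{j(k)}_{i(1)}=\mathsf{C}^{jk}_i\,p^1_i/(p^1_jp^1_k)$ into $S^{l(j)(k)}_{i(1)(1)}=\partial_{p^1_k}C^{l(j)}_{i(1)}-\partial_{p^1_j}C^{l(k)}_{i(1)}+C^{r(j)}_{i(1)}C^{l(k)}_{r(1)}-C^{r(k)}_{i(1)}C^{l(j)}_{r(1)}$, set $l=m$ and sum. Using $\partial \mathcal{P}^{1111}/\partial p^1_i=\mathcal{P}^{1111}/p^1_i$ the $p$-derivatives produce terms proportional to $1/(p^1_ip^1_j)$ times rational combinations of the $\mathsf{C}$-constants, and the quadratic terms simplify via $C^{j(m)}_{m(1)}=0$; collecting coefficients should yield exactly $(4\delta^{ij}-1)/8 \cdot 1/(p^1_ip^1_j)$. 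I expect the hardest single step to be the $C^{k(r)}_{i(1)}R^{(1)}_{(r)jk}$ contraction in the first part, because it mixes the three-valued case analysis of $\mathsf{C}^{jk}_i$ with the antisymmetric torsion factor and is the source of the asymmetric $p^1_i/p^1_k+p^1_i/p^1_l$ terms in the final formula; everything else is diagonal and essentially mechanical.
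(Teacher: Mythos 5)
Your overall strategy is the same as the paper's: identify the surviving curvature d-tensors, write the general Ricci contractions, and substitute the explicit components from (\ref{Cartan-can-connect}). Your treatment of the horizontal block agrees with the paper's definition $R_{ij}=R_{ijm}^{m}$, and your remarks that the mixed $P$-blocks die via $C_{i(1)|m}^{j(m)}=0$ and that the real work is the contraction $C_{i(1)}^{m(r)}R_{(r)jm}^{(1)}$ are on target.

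There is, however, a genuine error in the momentum block. You propose $R_{(1)(1)}^{(i)(j)}=-S_{m(1)(1)}^{m(i)(j)}$, i.e.\ tracing the upper index $l$ of $S_{i(1)(1)}^{l(j)(k)}$ against the lower index $i$. That contraction is identically zero: the derivative terms vanish because $C_{m(1)}^{m(i)}=C_{m(1)}^{i(m)}=0$ (using the symmetry $C_{i(1)}^{j(k)}=C_{i(1)}^{k(j)}$ together with (\ref{equalitie-C})), and the quadratic part $C_{m(1)}^{r(i)}C_{r(1)}^{m(j)}-C_{m(1)}^{r(j)}C_{r(1)}^{m(i)}$ cancels upon relabelling the dummy indices $m\leftrightarrow r$. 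Carried out as written, your plan therefore yields $R_{(1)(1)}^{(i)(j)}=0$ instead of $(4\delta^{ij}-1)/(8p_{i}^{1}p_{j}^{1})$. The correct trace, used in the paper following \cite{Oana+Neag-3}, pairs the lower index with the \emph{last momentum index}: $S_{(1)(1)}^{(i)(j)}=S_{m(1)(1)}^{i(j)(m)}=\partial C_{m(1)}^{i(j)}/\partial p_{m}^{1}+C_{m(1)}^{r(j)}C_{r(1)}^{i(m)}$, and only this choice produces the stated $(1-4\delta^{ij})/(8p_{i}^{1}p_{j}^{1})$. A smaller inaccuracy: your explanation of $R_{ii}=0$ via antisymmetry of the torsion factor in $i\leftrightarrow j$ does not apply, since in $C_{i(1)}^{m(r)}R_{(r)im}^{(1)}$ the antisymmetry is in $i\leftrightarrow m$ with $m$ summed; the cancellation actually comes from $\mathsf{C}_{i}^{mm}=\mathsf{C}_{i}^{mi}$ for every $m$, paired with the vanishing of the first four ($H$-built) terms when $i=j$.
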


\begin{proof}
Generally, the Ricci tensor of a Cartan canonical connection $C\Gamma (N)$
(produced by an arbitrary\ momentum Hamiltonian function) is determined by 
\textit{six} effective local Ricci d-tensors (for more details, see \cite%
{Oana+Neag-3}). For the particular Cartan canonical connection (\ref%
{Cartan-can-connect}) these reduce only to \textit{two} (the other four are
zero), where (sum by $r$ and $m$):%
\begin{equation*}
\begin{array}{lll}
\medskip R_{ij} & \overset{def}{=} & R_{ijm}^{m}={{\dfrac{\partial H_{ij}^{m}%
}{\partial x^{m}}}-{\dfrac{\partial H_{im}^{m}}{\partial x^{j}}}%
+H_{ij}^{r}H_{rm}^{m}-H_{im}^{r}H_{rj}^{m}+C_{i(1)}^{m(r)}R_{(r)jm}^{(1)},}
\\ 
\medskip S_{(1)(1)}^{(i)(j)} & \overset{def}{=} & S_{m\left( 1\right) \left(
1\right) }^{i\left( j\right) \left( m\right) }=\dfrac{\partial C_{m\left(
1\right) }^{i\left( j\right) }}{\partial p_{m}^{1}}-\dfrac{\partial
C_{m\left( 1\right) }^{i\left( m\right) }}{\partial p_{j}^{1}}+C_{m\left(
1\right) }^{r\left( j\right) }C_{r\left( 1\right) }^{i\left( m\right)
}-C_{m\left( 1\right) }^{r\left( m\right) }C_{r\left( 1\right) }^{i\left(
j\right) }= \\ 
& = & \dfrac{\partial C_{m\left( 1\right) }^{i\left( j\right) }}{\partial
p_{m}^{1}}+C_{m\left( 1\right) }^{r\left( j\right) }C_{r\left( 1\right)
}^{i\left( m\right) }.%
\end{array}%
\end{equation*}
\end{proof}

\begin{lemma}
The scalar curvature of the Cartan canonical connection of the $(t,x)$%
-conformal deformed Berwald-Mo\'{o}r Hamiltonian metric of order four (\ref%
{rheon-B-M2}) has the value%
\begin{equation*}
\text{\emph{Sc} }(C\Gamma (N))=-4e^{-2\sigma }\left[ \mathcal{P}^{1111}%
\right] ^{1/2}\Sigma _{11}-\frac{3}{2}h^{11}e^{2\sigma }\left[ \mathcal{P}%
^{1111}\right] ^{-1/2},
\end{equation*}%
where%
\begin{equation*}
\Sigma _{11}=\dfrac{\sigma _{12}}{p_{1}^{1}p_{2}^{1}}+\dfrac{\sigma _{13}}{%
p_{1}^{1}p_{3}^{1}}+\dfrac{\sigma _{14}}{p_{1}^{1}p_{4}^{1}}+\dfrac{\sigma
_{23}}{p_{2}^{1}p_{3}^{1}}+\dfrac{\sigma _{24}}{p_{2}^{1}p_{4}^{1}}+\dfrac{%
\sigma _{34}}{p_{3}^{1}p_{4}^{1}}.
\end{equation*}
\end{lemma}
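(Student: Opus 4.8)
The plan is to compute the scalar curvature directly from its definition as the total trace of the Ricci d-tensor with respect to the fundamental metrical d-tensor, using the two effective Ricci d-tensors already computed in the previous lemma. Recall that on the dual $1$-jet space the scalar curvature of a Cartan canonical connection splits, by the block-diagonal structure of $\mathbb{G}$ in \eqref{gravit-pot-B-M}, into a ``horizontal'' contribution $\overset{\ast }{g}\;^{\!\!ij}R_{ij}$ and a ``vertical'' contribution $h_{11}\overset{\ast }{g}_{ij}R_{(1)(1)}^{(i)(j)}$ (plus the $t$-block, which is absent here since there is no $R_{11}$ term). So the first step is to write
\begin{equation*}
\text{Sc }(C\Gamma (N))=\overset{\ast }{g}\;^{\!\!ij}R_{ij}+h_{11}\overset{\ast }{g}_{ij}R_{(1)(1)}^{(i)(j)},
\end{equation*}
and then evaluate the two sums separately.

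For the horizontal term I would substitute $\overset{\ast }{g}\;^{\!\!ij}$ from \eqref{g*-jos-(ij)} and $R_{ij}$ from \eqref{Ricci-local}. Since $R_{ij}=0$ when $i=j$, only the off-diagonal pairs contribute; for $i\neq j$ one has $\overset{\ast }{g}\;^{\!\!ij}=\tfrac{e^{-2\sigma }}{2}\cdot\tfrac{[\mathcal{P}^{1111}]^{1/2}}{p_i^1 p_j^1}$ (the factor $1-2\delta^{ij}$ equals $-1$), and $R_{ij}=-2\sigma_{ij}-\tfrac{p_i^1}{p_k^1}\sigma_{jk}-\tfrac{p_i^1}{p_l^1}\sigma_{jl}$ where $\{k,l\}$ is the complementary pair. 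The product $\overset{\ast }{g}\;^{\!\!ij}R_{ij}$ for each ordered off-diagonal pair then contains a term $-e^{-2\sigma}[\mathcal{P}^{1111}]^{1/2}\cdot\tfrac{\sigma_{ij}}{p_i^1 p_j^1}$ together with two ``mixed'' terms of the form $-\tfrac{e^{-2\sigma}}{2}[\mathcal{P}^{1111}]^{1/2}\tfrac{\sigma_{jk}}{p_j^1 p_k^1}$ and similarly with $l$. Summing over all $12$ ordered off-diagonal pairs, I expect each of the six unordered products $\tfrac{\sigma_{ab}}{p_a^1 p_b^1}$ appearing in $\Sigma_{11}$ to be collected with a definite total coefficient; the key bookkeeping claim is that the mixed terms combine with the direct terms so that the net coefficient of each $\tfrac{\sigma_{ab}}{p_a^1 p_b^1}$ is $-4 e^{-2\sigma}[\mathcal{P}^{1111}]^{1/2}$, yielding exactly $-4e^{-2\sigma}[\mathcal{P}^{1111}]^{1/2}\Sigma_{11}$.

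For the vertical term I would use $R_{(1)(1)}^{(i)(j)}=\tfrac{4\delta^{ij}-1}{8}\tfrac{1}{p_i^1 p_j^1}$ from \eqref{Ricci-local} and $\overset{\ast }{g}_{ij}$ from \eqref{g*-sus-(jk)}, namely $\overset{\ast }{g}_{ij}=\tfrac{e^{2\sigma}(1-2\delta_{ij})}{2}[\mathcal{P}^{1111}]^{-1/2}p_i^1 p_j^1$. The product $\overset{\ast }{g}_{ij}R_{(1)(1)}^{(i)(j)}$ has all the $p$-factors cancel, leaving $\tfrac{e^{2\sigma}}{16}[\mathcal{P}^{1111}]^{-1/2}(1-2\delta_{ij})(4\delta^{ij}-1)$; summing over $i,j=\overline{1,4}$ gives a purely numerical factor (the diagonal entries contribute $(-1)(3)=-3$ each, four of them, and the off-diagonal entries contribute $(1)(-1)=-1$ each, twelve of them, for a total of $-12-12=-24$, divided by $16$, which is $-3/2$). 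Multiplying by $h_{11}$ produces $-\tfrac{3}{2}h^{11}e^{2\sigma}[\mathcal{P}^{1111}]^{-1/2}$ — note $h_{11}\cdot h^{11}$ does not occur; rather the $h^{11}$ in the stated answer must come from a factor $h^{11}$ hidden in the vertical block normalization, so I would double-check the precise definition of Sc on the dual $1$-jet space from \cite{Oana+Neag-3} to get the $h^{11}$ versus $h_{11}$ placement right. Adding the two contributions gives the claimed formula. The main obstacle is the horizontal computation: it is the combinatorial regrouping of the twelve mixed $\tfrac{\sigma_{jk}}{p_j^1 p_k^1}$ terms across all off-diagonal index pairs that requires care, and getting the coefficient to land on exactly $-4$ (rather than, say, $-2$ or $-3$) is where a sign or counting slip would most easily occur.
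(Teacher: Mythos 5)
Your proposal is correct and follows essentially the same route as the paper: the paper's entire proof is the trace formula $\text{Sc }(C\Gamma (N))=\overset{\ast }{g}\;^{\!\!ij}R_{ij}-h^{11}\overset{\ast }{g}_{ij}S_{(1)(1)}^{(i)(j)}$ followed by the very computation you carry out. This also settles the one point you left open: the vertical trace is taken with $h^{11}\overset{\ast }{g}_{ij}$ (the inverse of the vertical block $h_{11}\overset{\ast }{g}\;^{\!\!ij}$ of $\mathbb{G}$), i.e.\ your term should read $+h^{11}\overset{\ast }{g}_{ij}R_{(1)(1)}^{(i)(j)}$, which is exactly what your numerical count $-24/16=-3/2$ then multiplies. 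One small slip: for $i\neq j$ the factor $1-2\delta ^{ij}$ equals $+1$, not $-1$ as your parenthetical says; the formula you actually use for $\overset{\ast }{g}\;^{\!\!ij}$ is the correct one, and your bookkeeping (each $\sigma _{ab}/(p_{a}^{1}p_{b}^{1})$ picks up $-2$ from the two direct terms and $4\times (-\tfrac{1}{2})=-2$ from the mixed terms, totalling $-4$) checks out.
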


\begin{proof}
The scalar curvature of the Cartan canonical connection (\ref%
{Cartan-can-connect}) is given by the general formula%
\begin{equation*}
\text{Sc }(C\Gamma (N))=\overset{\ast }{g}\;^{\!\!ij}R_{ij}-h^{11}\overset{%
\ast }{g}_{ij}S_{(1)(1)}^{(i)(j)}.
\end{equation*}
\end{proof}

The local description in the adapted basis of vector fields (\ref{a-b-v}) of
the global geometrical Einstein equations (\ref{Einstein-eq-global}) leads
us to

\begin{proposition}
The \textbf{geometrical Einstein-like equations} produced by the $(t,x)$%
-conformal deformed Berwald-Mo\'{o}r Hamiltonian metric of order four (\ref%
{rheon-B-M2}) are locally described by%
\begin{equation}
\left\{ 
\begin{array}{l}
2e^{-2\sigma }h_{11}\left[ \mathcal{P}^{1111}\right] ^{1/2}\Sigma _{11}+%
\dfrac{3}{4}e^{2\sigma }\left[ \mathcal{P}^{1111}\right] ^{-1/2}=\mathcal{K}%
\mathbb{T}_{11},\medskip \\ 
\medskip R_{ij}+\left\{ 2e^{-2\sigma }\left[ \mathcal{P}^{1111}\right]
^{1/2}\Sigma _{11}+\dfrac{3}{4}h^{11}e^{2\sigma }\left[ \mathcal{P}^{1111}%
\right] ^{-1/2}\right\} \overset{\ast }{g}_{ij}=\mathcal{K}\mathbb{T}_{ij},
\\ 
-S_{(1)(1)}^{(i)(j)}+\left\{ \frac{{}}{{}}2e^{-2\sigma }h_{11}\left[ 
\mathcal{P}^{1111}\right] ^{1/2}\Sigma _{11}+\right. \medskip \\ 
\hspace{25mm}\left. +\dfrac{3}{4}e^{2\sigma }\left[ \mathcal{P}^{1111}\right]
^{-1/2}\right\} \overset{\ast }{g}\;^{\!\!ij}=\mathcal{K}\mathbb{T}%
_{(1)(1)}^{(i)(j)}, \\ 
\begin{array}{lll}
0=\mathbb{T}_{1i}, & 0=\mathbb{T}_{i1}, & 0=\mathbb{T}_{(1)1}^{(i)}\medskip ,
\\ 
0=\mathbb{T}_{1(1)}^{\text{ \ }(i)}, & 0=\mathbb{T}_{i(1)}^{\text{ \ }(j)},
& 0=\mathbb{T}_{(1)j}^{(i)}.%
\end{array}%
\end{array}%
\right.  \label{E-1}
\end{equation}
\end{proposition}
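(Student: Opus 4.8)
The plan is to take the global Einstein equations (\ref{Einstein-eq-global}) and simply expand both sides in the adapted basis of d-vector fields (\ref{a-b-v}), reading off one scalar equation for each independent block of the metrical d-tensor $\mathbb{G}$. Since $\mathbb{G}$ from (\ref{gravit-pot-B-M}) is block-diagonal with blocks along $dt\otimes dt$, $dx^{i}\otimes dx^{j}$ and $\delta p_{i}^{1}\otimes\delta p_{j}^{1}$, and since the Ricci d-tensor computed in Lemma (the one giving (\ref{Ricci-local})) has only the two nonzero blocks $R_{ij}$ and $R_{(1)(1)}^{(i)(j)}$, the left-hand side $\text{Ric}-\tfrac{1}{2}\text{Sc}\cdot\mathbb{G}$ decomposes into exactly the six component families appearing in (\ref{E-1}): the $tt$-component, the $ij$-component, the $p^{i}p^{j}$-component, and the three mixed families $(1i)$, $(i1)$, $\{(1)1\}^{(i)}$, $\{1(1)\}^{(i)}$, $\{i(1)\}^{(j)}$, $\{(1)j\}^{(i)}$, which all vanish on the left because both $\text{Ric}$ and $\mathbb{G}$ have no such components.

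Concretely, I would proceed as follows. First, for the $tt$-block: $R_{11}=0$ (there is no $\overset{\delta}{\delta t}$-direction curvature contribution here — the only temporal Christoffel symbol \textsc{k}$_{11}^{1}$ contributes nothing to Ric), so the $dt\otimes dt$-component of the field equation reads $-\tfrac{1}{2}\text{Sc}(C\Gamma(N))\,h_{11}=\mathcal{K}\mathbb{T}_{11}$; substituting the value of $\text{Sc}(C\Gamma(N))$ from the preceding Lemma and multiplying through gives the first line of (\ref{E-1}). Second, for the $x^{i}\otimes x^{j}$-block: the component reads $R_{ij}-\tfrac{1}{2}\text{Sc}(C\Gamma(N))\,\overset{\ast}{g}_{ij}=\mathcal{K}\mathbb{T}_{ij}$; again inserting $\text{Sc}(C\Gamma(N))$ and noting $-\tfrac{1}{2}\text{Sc}(C\Gamma(N))=2e^{-2\sigma}[\mathcal{P}^{1111}]^{1/2}\Sigma_{11}+\tfrac{3}{4}h^{11}e^{2\sigma}[\mathcal{P}^{1111}]^{-1/2}$ yields the second line. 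Third, for the $\delta p_{i}^{1}\otimes\delta p_{j}^{1}$-block: the coefficient of that block in $\mathbb{G}$ is $h_{11}\overset{\ast}{g}\,^{\!\!ij}$ and the vertical Ricci component is $R_{(1)(1)}^{(i)(j)}=-S_{(1)(1)}^{(i)(j)}$, so the equation is $-S_{(1)(1)}^{(i)(j)}-\tfrac{1}{2}\text{Sc}(C\Gamma(N))\,h_{11}\overset{\ast}{g}\,^{\!\!ij}=\mathcal{K}\mathbb{T}_{(1)(1)}^{(i)(j)}$; factoring $h_{11}$ out of the bracket (so the $h^{11}$ in $\text{Sc}$ cancels) produces the third line. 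Finally, all remaining mixed components of $\text{Ric}-\tfrac{1}{2}\text{Sc}\cdot\mathbb{G}$ vanish identically, which forces the corresponding six components of $\mathbb{T}$ to be zero, giving the last block of (\ref{E-1}).

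The only genuine content beyond bookkeeping is verifying that $R_{11}=0$ and that all the mixed-curvature Ricci d-tensors (the ones that would sit in positions like $R_{(1)i}^{(?)}$, $R_{1i}$, etc.) indeed vanish for this connection; but this is already guaranteed by the Ricci Lemma above, which states that for $C\Gamma(N)$ in (\ref{Cartan-can-connect}) only the two families $R_{ij}$ and $S_{(1)(1)}^{(i)(j)}$ survive. So the proof is essentially a transcription: decompose the global tensor equation blockwise, substitute the previously computed $\text{Sc}(C\Gamma(N))$ and the previously computed $R_{ij}$, $S_{(1)(1)}^{(i)(j)}$, and collect terms. I expect the main (though still minor) obstacle to be purely clerical: keeping the $h_{11}$ versus $h^{11}$ factors straight between the three blocks — the $tt$- and $p^{i}p^{j}$-blocks carry an explicit $h_{11}$ from $\mathbb{G}$ which partly cancels the $h^{11}$ appearing inside $\text{Sc}(C\Gamma(N))$, whereas the $x^{i}x^{j}$-block does not — so the three "$\Sigma_{11}$ plus $[\mathcal{P}^{1111}]^{-1/2}$" brackets in (\ref{E-1}) differ precisely by such an $h_{11}$ factor, and one must record that difference correctly.

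\begin{proof}
The global geometrical Einstein equations (\ref{Einstein-eq-global}) are tensorial, so it suffices to write them component-by-component in the adapted cobasis $\{dt,\ dx^{i},\ \delta p_{i}^{1}\}$ dual to (\ref{a-b-v}). By the Ricci Lemma above, the only nonzero adapted components of $\text{Ric }(C\Gamma(N))$ are $R_{ij}$ and $R_{(1)(1)}^{(i)(j)}=-S_{(1)(1)}^{(i)(j)}$, given by (\ref{Ricci-local}); in particular $R_{11}=0$, and every mixed component of the Ricci d-tensor (those of type $R_{1i}$, $R_{i1}$, $R_{(1)1}^{(i)}$, $R_{1(1)}^{\;(i)}$, $R_{i(1)}^{\;(j)}$, $R_{(1)j}^{(i)}$) vanishes. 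On the other hand, the metrical d-tensor $\mathbb{G}$ from (\ref{gravit-pot-B-M}) is block diagonal with blocks $h_{11}$, $\overset{\ast}{g}_{ij}$ and $h_{11}\overset{\ast}{g}\;^{\!\!ij}$, and carries no mixed components either. Hence the left-hand side $\text{Ric }(C\Gamma(N))-\tfrac{1}{2}\text{Sc }(C\Gamma(N))\,\mathbb{G}$ has exactly the three diagonal blocks described below, and all of its mixed components vanish, which forces the corresponding components of $\mathbb{T}$ to be zero; this is precisely the last block of (\ref{E-1}).

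Write, using the value of $\text{Sc }(C\Gamma(N))$ from the preceding Lemma,
\begin{equation*}
-\frac{1}{2}\text{Sc }(C\Gamma(N))=2e^{-2\sigma}\left[\mathcal{P}^{1111}\right]^{1/2}\Sigma_{11}+\frac{3}{4}h^{11}e^{2\sigma}\left[\mathcal{P}^{1111}\right]^{-1/2}.
\end{equation*}
For the $dt\otimes dt$ block we have $R_{11}=0$, so the field equation reads $-\tfrac{1}{2}\text{Sc }(C\Gamma(N))\,h_{11}=\mathcal{K}\mathbb{T}_{11}$; multiplying the displayed identity by $h_{11}$ and using $h_{11}h^{11}=1$ gives
\begin{equation*}
2e^{-2\sigma}h_{11}\left[\mathcal{P}^{1111}\right]^{1/2}\Sigma_{11}+\frac{3}{4}e^{2\sigma}\left[\mathcal{P}^{1111}\right]^{-1/2}=\mathcal{K}\mathbb{T}_{11},
\end{equation*}
which is the first line of (\ref{E-1}). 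For the $dx^{i}\otimes dx^{j}$ block the equation is $R_{ij}-\tfrac{1}{2}\text{Sc }(C\Gamma(N))\,\overset{\ast}{g}_{ij}=\mathcal{K}\mathbb{T}_{ij}$, i.e. the second line of (\ref{E-1}). For the $\delta p_{i}^{1}\otimes\delta p_{j}^{1}$ block, whose coefficient in $\mathbb{G}$ is $h_{11}\overset{\ast}{g}\;^{\!\!ij}$ and whose Ricci component is $-S_{(1)(1)}^{(i)(j)}$, the equation is
\begin{equation*}
-S_{(1)(1)}^{(i)(j)}-\frac{1}{2}\text{Sc }(C\Gamma(N))\,h_{11}\overset{\ast}{g}\;^{\!\!ij}=\mathcal{K}\mathbb{T}_{(1)(1)}^{(i)(j)};
\end{equation*}
multiplying the displayed identity by $h_{11}$ cancels the $h^{11}$ in the second term of the bracket and yields the third line of (\ref{E-1}). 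This exhausts all adapted components, so (\ref{E-1}) holds.
\end{proof}
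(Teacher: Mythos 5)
Your proposal is correct and follows exactly the route the paper intends: the paper's "proof" is just the sentence preceding the proposition (decompose the global Einstein equations (\ref{Einstein-eq-global}) blockwise in the adapted basis, insert the Ricci components from (\ref{Ricci-local}) and the scalar curvature, and note that the vanishing mixed components force the corresponding $\mathbb{T}$-components to zero). Your bookkeeping of the $h_{11}$ versus $h^{11}$ factors in the three diagonal blocks is accurate and reproduces (\ref{E-1}) verbatim.
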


\begin{corollary}
The momentum stress-energy d-tensor of matter $\mathcal{T}$ satisfies the
following \textbf{geometrical conservation-like laws} (sum by $m$):%
\begin{equation*}
\left\{ 
\begin{array}{l}
\bigskip \mathbb{T}_{1/1}^{1}+\mathbb{T}_{1|m}^{m}+\mathbb{T}%
_{(m)1}^{(1)}|_{(1)}^{(m)}=0 \\ 
\bigskip \mathbb{T}_{i/1}^{1}+\mathbb{T}_{i|m}^{m}+\mathbb{T}%
_{(m)i}^{(1)}|_{(1)}^{(m)}=\mathsf{E}_{i|m}^{m} \\ 
\mathbb{T}_{\text{ }(1)/1}^{1(i)}+\mathbb{T}_{\text{ \ }(1)|m}^{m(i)}+%
\mathbb{T}_{(m)(1)}^{(1)(i)}|_{(1)}^{(m)}=\dfrac{e^{-2\sigma }\left[ 
\mathcal{P}^{1111}\right] ^{1/2}}{\mathcal{K}}\cdot \left[ \dfrac{\Sigma
_{11}}{p_{i}^{1}}+2\dfrac{\partial \Sigma _{11}}{\partial p_{i}^{1}}\right] ,%
\end{array}%
\right.
\end{equation*}%
where (sum by $r$):$\medskip $

$\bigskip \mathbb{T}_{1}^{1}\overset{def}{=}h^{11}\mathbb{T}_{11}=\mathcal{K}%
^{-1}\left\{ 2e^{-2\sigma }\left[ \mathcal{P}^{1111}\right] ^{1/2}\Sigma
_{11}+\dfrac{3}{4}h^{11}e^{2\sigma }\left[ \mathcal{P}^{1111}\right]
^{-1/2}\right\} ,$

$\bigskip \mathbb{T}_{1}^{m}\overset{def}{=}\overset{\ast }{g}\;^{\!\!mr}%
\mathbb{T}_{r1}=0,\quad\mathbb{T}_{(m)1}^{(1)}\overset{def}{=}h^{11}\overset{%
\ast }{g}_{mr}\mathbb{T}_{(1)1}^{(r)}=0,\quad \mathbb{T}_{i}^{1}\overset{def}%
{=}h^{11}\mathbb{T}_{1i}=0,$

$\bigskip \mathbb{T}_{i}^{m}\overset{def}{=}\overset{\ast }{g}\;^{\!\!mr}%
\mathbb{T}_{ri}=\mathsf{E}_{i}^{m}:=\mathcal{K}^{-1}\cdot \left[ \overset{%
\ast }{g}\;^{\!\!mr}R_{ri}+\delta _{i}^{m}\left\{ \frac{{}}{{}}2e^{-2\sigma }%
\left[ \mathcal{P}^{1111}\right] ^{1/2}\Sigma _{11}+\right. \right. $

$\bigskip \hspace{45mm}\left. \left. +\dfrac{3}{4}h^{11}e^{2\sigma }\left[ 
\mathcal{P}^{1111}\right] ^{-1/2}\right\} \right] ,$

$\bigskip \mathbb{T}_{(m)i}^{(1)}\overset{def}{=}h^{11}\overset{\ast }{g}%
_{mr}\mathbb{T}_{(1)i}^{(r)}=0,\;\;\mathbb{T}_{\text{ }(1)}^{1(i)}\overset{%
def}{=}h^{11}\mathbb{T}_{1(1)}^{\text{ \ }(i)}=0,\;\;\mathbb{T}_{\text{ \ }%
(1)}^{m(i)}\overset{def}{=}\overset{\ast }{g}\;^{\!\!mr}\mathbb{T}_{r(1)}^{%
\text{ \ }(i)}=0,$

$\medskip \mathbb{T}_{(m)(1)}^{(1)(i)}\overset{def}{=}h^{11}\overset{\ast }{g%
}\;_{mr}^{\!\!}\mathbb{T}_{(1)(1)}^{(r)(i)}=\dfrac{h^{11}e^{2\sigma }\left[ 
\mathcal{P}^{1111}\right] ^{-1/2}}{8\mathcal{K}}\dfrac{p_{m}}{p_{i}}+$

$\bigskip \hspace{25mm}+\delta _{m}^{i}\cdot \left[ \dfrac{h^{11}e^{2\sigma }%
\left[ \mathcal{P}^{1111}\right] ^{-1/2}}{4\mathcal{K}}+\dfrac{2e^{-2\sigma }%
\left[ \mathcal{P}^{1111}\right] ^{1/2}\Sigma _{11}}{\mathcal{K}}\right] ,$

and we also have (summation by $m$ and $r$, but no sum by $i$)$\bigskip $

$\bigskip \mathbb{T}_{1/1}^{1}\overset{def}{=}\dfrac{\delta \mathbb{T}%
_{1}^{1}}{\delta t}+\mathbb{T}_{1}^{1}$\textsc{k}$_{11}^{1}-\mathbb{T}%
_{1}^{1}\text{\textsc{k}}_{11}^{1}=\dfrac{\delta \mathbb{T}_{1}^{1}}{\delta t%
},\quad \mathbb{T}_{1|m}^{m}\overset{def}{=}\dfrac{\delta \mathbb{T}_{1}^{m}%
}{\delta x^{m}}+\mathbb{T}_{1}^{r}H_{rm}^{m},$

$\bigskip \mathbb{T}_{(m)1}^{(1)}|_{(1)}^{(m)}\overset{def}{=}\dfrac{%
\partial \mathbb{T}_{(m)1}^{(1)}}{\partial p_{m}^{1}}-\mathbb{T}%
_{(r)1}^{(1)}C_{m(1)}^{r(m)}=\dfrac{\partial \mathbb{T}_{(m)1}^{(1)}}{%
\partial p_{m}^{1}},$

$\bigskip \mathbb{T}_{i/1}^{1}\overset{def}{=}\dfrac{\delta \mathbb{T}%
_{i}^{1}}{\delta t}+\mathbb{T}_{i}^{1}\text{\textsc{k}}_{11}^{1}-\mathbb{T}%
_{r}^{1}A_{i1}^{r}=\dfrac{\delta \mathbb{T}_{i}^{1}}{\delta t}+\mathbb{T}%
_{i}^{1}\text{\textsc{k}}_{11}^{1},$

$\bigskip \mathbb{T}_{i|m}^{m}\overset{def}{=}\dfrac{\delta \mathbb{T}%
_{i}^{m}}{\delta x^{m}}+\mathbb{T}_{i}^{r}H_{rm}^{m}-\mathbb{T}%
_{r}^{m}H_{im}^{r}=\mathsf{E}_{i|m}^{m}:=\dfrac{\delta \mathsf{E}_{i}^{m}}{%
\delta x^{m}}+4\mathsf{E}_{i}^{m}\sigma _{m}-4\mathsf{E}_{i}^{i}\sigma _{i},$

$\bigskip \mathbb{T}_{(m)i}^{(1)}|_{(1)}^{(m)}\overset{def}{=}\dfrac{%
\partial \mathbb{T}_{(m)i}^{(1)}}{\partial p_{m}^{1}}-\mathbb{T}%
_{(r)i}^{(1)}C_{m(1)}^{r(m)}-\mathbb{T}_{(m)r}^{(1)}C_{i(1)}^{r(m)},$

$\bigskip \mathbb{T}_{\text{ }(1)/1}^{1(i)}\overset{def}{=}\dfrac{\delta 
\mathbb{T}_{\text{ }(1)}^{1(i)}}{\delta t}+\mathbb{T}_{\text{ }%
(1)}^{1(r)}A_{r1}^{i},\quad \mathbb{T}_{\text{ \ }(1)|m}^{m(i)}\overset{def}{%
=}\dfrac{\delta \mathbb{T}_{\text{ \ }(1)}^{m(i)}}{\delta x^{m}}+4\mathbb{T}%
_{\text{ \ }(1)}^{m(i)}\sigma _{m}+4\mathbb{T}_{\text{ }(1)}^{i(i)}\sigma
_{i},$

$\mathbb{T}_{(m)(1)}^{(1)(i)}|_{(1)}^{(m)}\overset{def}{=}\dfrac{\partial 
\mathbb{T}_{(m)(1)}^{(1)(i)}}{\partial p_{m}^{1}}-\mathbb{T}%
_{(r)(1)}^{(1)(i)}C_{m(1)}^{r(m)}+\mathbb{T}%
_{(m)(1)}^{(1)(r)}C_{r(1)}^{i(m)}.\medskip $
\end{corollary}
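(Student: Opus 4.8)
The plan is to read these conservation-like laws off as the contracted Bianchi identities for the Cartan canonical $N$-linear connection $C\Gamma(N)$ of (\ref{Cartan-can-connect}), applied to the geometrical Einstein-like equations (\ref{E-1}); equivalently, since by those equations every adapted component of the momentum stress-energy d-tensor $\mathbb{T}$ is already explicit, the proof reduces to a direct verification. I would first raise one index in each of the effective Einstein-like components of (\ref{E-1}), using $h^{11}$ and $\overset{\ast}{g}\;^{\!\!ij}$, to obtain the mixed d-tensors listed in the statement: $\mathbb{T}_1^1=h^{11}\mathbb{T}_{11}$, $\mathbb{T}_i^m=\overset{\ast}{g}\;^{\!\!mr}\mathbb{T}_{ri}=:\mathsf{E}_i^m$, $\mathbb{T}_{\ (1)}^{1(i)}=h^{11}\mathbb{T}_{1(1)}^{\ (i)}$, $\mathbb{T}_{(m)(1)}^{(1)(i)}=h^{11}\overset{\ast}{g}_{mr}\mathbb{T}_{(1)(1)}^{(r)(i)}$, and the rest. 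Because (\ref{E-1}) forces $\mathbb{T}_{1i}=\mathbb{T}_{i1}=\mathbb{T}_{(1)1}^{(i)}=\mathbb{T}_{1(1)}^{\ (i)}=\mathbb{T}_{i(1)}^{\ (j)}=\mathbb{T}_{(1)j}^{(i)}=0$, all the ``cross'' mixed components vanish, and only $\mathbb{T}_1^1$, $\mathsf{E}_i^m$ and $\mathbb{T}_{(m)(1)}^{(1)(i)}$ survive; their closed forms I would assemble from the Ricci Lemma (\ref{Ricci-local}), the scalar-curvature Lemma, and $S_{(1)(1)}^{(i)(j)}=-R_{(1)(1)}^{(i)(j)}$, reproducing exactly the closed expressions listed in the statement.

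Next I would invoke the general Bianchi identities of a Cartan $N$-linear connection on the dual $1$-jet space (as established in \cite{Oana+Neag-3}) together with their contractions, which produce one divergence relation for each of the free indices $1$, $i$ and the upper vertical index $(i)$: a sum of an $h$-covariant derivative $(/1)$, a horizontal covariant divergence $(|m)$ and a vertical covariant divergence $(|_{(1)}^{(m)})$ of the corresponding mixed components of the Einstein d-tensor, equated to a curvature-generated source. Substituting $\text{Ric}(C\Gamma(N))-\tfrac12\text{Sc}(C\Gamma(N))\,\mathbb{G}=\mathcal{K}\mathbb{T}$ turns these into the three displayed laws. I would then unfold every covariant operator with the coefficients of (\ref{Cartan-can-connect}): since $A_{j1}^i=0$ and $H_{jk}^i=4\delta_j^i\delta_k^i\sigma_i$, the $h$- and horizontal pieces collapse to the simple forms in the ``$\overset{def}{=}$'' list, while in the vertical pieces the $C_{i(1)}^{j(k)}$-terms are handled by the trace identities $C_{m(1)}^{j(m)}=0$ and $C_{i(1)}^{j(m)}p_m^1=0$ of (\ref{equalitie-C}); that list is precisely this unfolding.

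The substantive step is evaluating the three sources. For the first law (index $1$) the curvature contribution cancels against the $h$-divergence of $\text{Sc}$, leaving $0$. For the second law (index $i$) one checks that the total divergence collapses to $\mathsf{E}_{i|m}^m=\dfrac{\delta \mathsf{E}_i^m}{\delta x^m}+4\mathsf{E}_i^m\sigma_m-4\mathsf{E}_i^i\sigma_i$, because $\mathbb{T}_i^1=0$ and $\mathbb{T}_{(m)i}^{(1)}=0$, so only the horizontal term $\mathbb{T}_{i|m}^m=\mathsf{E}_{i|m}^m$ remains. For the third law (vertical index $(i)$) the source is genuinely non-zero: after differentiating $\text{Sc}(C\Gamma(N))$---in particular the factor $\Sigma_{11}$---with respect to $p_i^1$ and using $\partial \mathcal{P}^{1111}/\partial p_i^1=\mathcal{P}^{1111}/p_i^1$, the terms collect into $\dfrac{e^{-2\sigma}\left[\mathcal{P}^{1111}\right]^{1/2}}{\mathcal{K}}\left[\dfrac{\Sigma_{11}}{p_i^1}+2\dfrac{\partial \Sigma_{11}}{\partial p_i^1}\right]$. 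I expect the main obstacle to be the explicit vertical divergence $\mathbb{T}_{(m)(1)}^{(1)(i)}|_{(1)}^{(m)}$, where the non-trivial $C$-terms interact with the $\delta_m^i$-part of $\mathbb{T}_{(m)(1)}^{(1)(i)}$; carrying the traceless identities (\ref{equalitie-C}) throughout should make the cancellations transparent, and this bookkeeping is essentially the whole content of the proof beyond the routine index-raising.
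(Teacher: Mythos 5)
Your proposal is correct and follows essentially the same route as the paper, whose proof is simply the remark that the local Einstein-like equations (\ref{E-1}) make every adapted component of $\mathbb{T}$ explicit, after which the three divergence identities follow by direct computation; your expansion of that computation (index-raising via $h^{11}$ and $\overset{\ast}{g}\;^{\!\!ij}$, vanishing of the cross components, unfolding the covariant operators with $A_{j1}^{i}=0$, $H_{jk}^{i}=4\delta_{j}^{i}\delta_{k}^{i}\sigma_{i}$ and the trace identities (\ref{equalitie-C})) is exactly what the paper leaves implicit. The contracted-Bianchi framing you add is harmless but not needed, as you yourself note when reducing everything to direct verification.
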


\begin{proof}
The local Einstein equations (\ref{E-1}), together with some direct
computations, lead us to what we were looking for.
\end{proof}

\subsection{Momentum electromagnetic-like geometrical model}

\hspace{5mm}In the paper \cite{Oana+Neag-3}, a geometrical theory for an
electromagnetism depending on momenta was also created, using only a given
Hamiltonian function $H$ on the momentum phase space $J^{1\ast }(\mathbb{R}%
,M^{4})$. In the background of the jet momentum Hamiltonian geometry from
this paper, we work with the \textit{electromagnetic distinguished }$2$%
\textit{-form} (sum by $i$ and $j$)%
\begin{equation*}
\mathbb{F}=F_{(1)j}^{(i)}\delta p_{i}^{1}\wedge dx^{j},
\end{equation*}%
where (sum by$\ r$ and $m$)%
\begin{equation*}
F_{(1)j}^{(i)}=\frac{h^{11}}{2}\left[ \overset{\ast }{g}\;^{\!\!jr}\underset{%
2}{N}\text{{}}_{(r)i}^{(1)}-\overset{\ast }{g}\;^{\!\!ir}\underset{2}{N}%
\text{{}}_{(r)j}^{(1)}+\left( \overset{\ast }{g}\;^{\!\!jr}H_{ri}^{m}-%
\overset{\ast }{g}\;^{\!\!ir}H_{rj}^{m}\right) p_{m}^{1}\right] .
\end{equation*}%
The above electromagnetic components depending on momenta are characterized
by some natural \textit{geometrical Maxwell-like equations} (for more
details, see Oan\u{a} and Neagu \cite{Oana+Neag-2}, \cite{Oana+Neag-3}).

By a direct calculation, we see that the $(t,x)$-conformal deformed
Berwald-Mo\'{o}r Hamiltonian metric of order four (\ref{rheon-B-M2})
produces null momentum electromagnetic components: $F_{(1)j}^{(i)}=0$.
Consequently, our dual jet $(t,x)$-conformal deformed Berwald-Mo\'{o}r
Hamiltonian geometrical electromagnetic theory is trivial one. Probably,
this fact suggests that the dual jet $(t,x)$-conformal deformed Berwald-Mo%
\'{o}r Hamiltonian structure (\ref{rheon-B-M2}) has rather gravitational
connotations than electromagnetic ones on the momentum phase space $J^{1\ast
}(\mathbb{R},M^{4})$.

\textbf{Acknowledgements.} The authors of this paper thank to Professors Gh.
Atanasiu and D.G. Pavlov for their encouragements and the useful discussions
on the Berwald-Mo\'{o}r research topic.

Alexandru \textsf{OAN\u{A}} and Mircea \textsf{NEAGU}

University Transilvania of Bra\c{s}ov, Department of Mathematics and
Informatics, Blvd. Iuliu Maniu, no. 50, Bra\c{s}ov 500091, Romania.

\textit{E-mails:} alexandru.oana@unitbv.ro, mircea.neagu@unitbv.ro

\end{document}